\newtheorem{thm}{Theorem}[section]
\theoremstyle{definition}
\theoremstyle{remark}
\theoremstyle{plain}
\theoremstyle{remark}
\newtheorem*{example}{Example}
\numberwithin{equation}{section}
\begin{document}

\title{ On a conjecture concerning the fraction  of  invertible  m-times  Persymmetric  Matrices over $\mathbb{F}_{2} $}
\author{Jorgen~Cherly}
\address{D\'epartement de Math\'ematiques, Universit\'e de
    Brest, 29238 Brest cedex~3, France}
\email{Jorgen.Cherly@univ-brest.fr}
\email{andersen69@wanadoo.fr}

\maketitle 
\begin{abstract}
Dans cette article nous annon\c{c}ons que la fraction d'invertible m-fois matrices persym\' etriques sur $ \mathbb{F}_{2}$
est \' egale \' a $\prod_{j=1}^{m}(1-2^{-j}) $

 \end{abstract}

\selectlanguage{english}

\begin{abstract}
In this paper we state that the fraction of invertible m-times persymmetric matrices over  $ \mathbb{F}_{2}$
is equal to  $\prod_{j=1}^{m}(1-2^{-j}) $

 \end{abstract}
 
 \maketitle 
\newpage
\tableofcontents
\newpage

   \section{Some notations concerning  m-times persymmetric matrices over  $ \mathbb{F}_{2}$}
  \label{sec 1}

  Let  $m, s_{1},s_{2},\ldots s_{m},\delta, k$   be positive integers such that 
  $1\leqslant m\leqslant \sum_{j=1}^{m} s_{j}= \delta \leqslant k.$\\
   We can assume without restriction that 
  $1\leqslant s_{1}\leqslant s_{2}\leqslant \ldots \leqslant s_{m} $ since  the rank of a matrix does not change under elementary row operations.
\\
  We denote by  $ \Gamma_{\delta}^{\left[s_{1}\atop{\vdots \atop s_{m}}\right]\times k} $the number  of rank $ \delta$ m- times persymmetric matrices over the finite field  $\mathbb{F}_{2} $  of the below form
\begin{equation}
 \label{eq 1.1}
 \left (  \begin{array} {ccccc}
\alpha  _{1}^{(1)} & \alpha  _{2}^{(1)} & \ldots  &   \alpha_{k-1}^{(1)} &   \alpha_{k}^{(1)}  \\
\alpha  _{2}^{(1)} & \alpha  _{3}^{(1)}  & \ldots   &   \alpha_{k}^{(1)} &   \alpha_{k+1}^{(1)} \\ 
\vdots & \vdots & \vdots  & \vdots & \vdots \\
\alpha  _{s_{1}}^{(1)} & \alpha  _{s_{1}+1}^{(1)}  & \ldots  &   \alpha_{s_{1}+k-2}^{(1)} &   \alpha_{s_{1}+k-1}^{(1)}  \\ 
\hline \\
\alpha  _{1}^{(2)} & \alpha  _{2}^{(2)}  & \ldots  &   \alpha_{k-1}^{(2)} &   \alpha_{k}^{(2)}  \\
\alpha  _{2}^{(2)} & \alpha  _{3}^{(2)}  & \ldots  &   \alpha_{k}^{(2)} &   \alpha_{k+1}^{(2)} \\ 
\vdots & \vdots & \vdots & \vdots & \vdots  \\
\alpha  _{s_{2}}^{(2)} & \alpha  _{s_{2}+1}^{(2)}  & \ldots  &   \alpha_{s_{2}+k-2}^{(2)} &   \alpha_{s_{2}+k-1}^{(2)}  \\ 
\hline\\
\alpha  _{1}^{(3)} & \alpha  _{2}^{(3)}  & \ldots  &   \alpha_{k-1}^{(3)}  &   \alpha_{k}^{(3)} \\
\alpha  _{2}^{(3)} & \alpha  _{3}^{(3)}  & \ldots  &   \alpha_{k}^{(3)}&   \alpha_{k+1}^{(3)}  \\ 
\vdots & \vdots & \vdots & \vdots & \vdots \\
\alpha  _{s_{3}}^{(3)} & \alpha  _{s_{3}+1}^{(3)}  & \ldots  &   \alpha_{s_{3}+k-2}^{(3)}&   \alpha_{s_{3}+k-1}^{(3)}   \\ 
\hline \\
\vdots & \vdots & \vdots  & \vdots & \vdots \\
\hline \\
\alpha  _{1}^{(m)} & \alpha  _{2}^{(m)}  & \ldots  &   \alpha_{k-1}^{(m)} &   \alpha_{k}^{(m)}  \\
\alpha  _{2}^{(m)} & \alpha  _{3}^{(m)}  & \ldots  &   \alpha_{k}^{(m)}&   \alpha_{k+1}^{(m)}  \\ 
\vdots & \vdots & \vdots  & \vdots & \vdots \\
\alpha  _{s_{m}}^{(m)} & \alpha  _{s_{m}+1}^{(m)} & \ldots   &   \alpha_{s_{m}+k-2}^{(m)} &   \alpha_{s_{m}+k-1}^{(m)}  \\ 
\end{array} \right )  
\end{equation}
$\boxed{  \textbf{The case $s_{1}=s_{2}=\ldots = s_{m}=1,\; m=\delta \leqslant k $}}  $  \\[0.05cm]
  
  Then $\delta = m$ and $ \Gamma_{\delta}^{\left[1\atop{\vdots \atop 1}\right]\times k} $denotes the number  of rank $ m$ $m\times k$  matrices over  $\mathbb{F}_{2} $  of the below form

\begin{equation}
 \label{eq 1.2} 
   \left (  \begin{array} {ccccc}
\alpha  _{1}^{(1)} & \alpha  _{2}^{(1)} & \ldots  &   \alpha_{k-1}^{(1)} &   \alpha_{k}^{(1)}  \\
\alpha  _{1}^{(2)} & \alpha  _{2}^{(2)}  & \ldots  &   \alpha_{k-1}^{(2)} &   \alpha_{k}^{(2)}  \\
\alpha  _{1}^{(3)} & \alpha  _{2}^{(3)}  & \ldots  &   \alpha_{k-1}^{(3)}  &   \alpha_{k}^{(3)} \\
\vdots & \vdots & \vdots  & \vdots & \vdots \\
\alpha  _{1}^{(m)} & \alpha  _{2}^{(m)}  & \ldots  &   \alpha_{k-1}^{(m)} &   \alpha_{k}^{(m)}  
\end{array} \right )  
\end{equation}
$\boxed{ \textbf{The case $m=1,\quad 2\leqslant s_{1}=\delta \leqslant k$}} $\\[0.05cm]
   Then $\delta = s_{1}$ and $ \Gamma_{\delta}^{[ s_{1}] \times k} $denotes the number  of persymmetric     $s_{1}\times k$  rank $ s_{1}$   matrices over  $\mathbb{F}_{2} $  of the below form
   
 \begin{equation}
 \label{eq 1.3}  
   \left (  \begin{array} {ccccc}
\alpha  _{1}^{(1)} & \alpha  _{2}^{(1)} & \ldots  &   \alpha_{k-1}^{(1)} &   \alpha_{k}^{(1)}  \\
\alpha  _{2}^{(1)} & \alpha  _{3}^{(1)}  & \ldots   &   \alpha_{k}^{(1)} &   \alpha_{k+1}^{(1)} \\ 
\vdots & \vdots & \vdots  & \vdots & \vdots \\
\alpha  _{s_{1}}^{(1)} & \alpha  _{s_{1}+1}^{(1)}  & \ldots  &   \alpha_{s_{1}+k-2}^{(1)} &   \alpha_{s_{1}+k-1}^{(1)}  \\ 
\end{array} \right )  
\end{equation}

$\boxed{   \textbf{The case $m=2,\quad 2\leqslant s_{1} \leqslant s_{2} ,\; s_{1}+s_{2}=\delta \leqslant k$}} $\\[0.05cm]
 
   Then $\delta = s_{1}+s_{2}$ and $ \Gamma_{\delta}^{\left[s_{1}\atop s_{2} \right]\times k} $denotes  the number  of double persymmetric \\

       $(s_{1}+s_{2}) \times k$  rank $ s_{1}+s_{2} $   matrices over  $\mathbb{F}_{2} $  of the below form

  \begin{equation}
 \label{eq 1.4}     
   \left (  \begin{array} {ccccc}
\alpha  _{1}^{(1)} & \alpha  _{2}^{(1)} & \ldots  &   \alpha_{k-1}^{(1)} &   \alpha_{k}^{(1)}  \\
\alpha  _{2}^{(1)} & \alpha  _{3}^{(1)}  & \ldots   &   \alpha_{k}^{(1)} &   \alpha_{k+1}^{(1)} \\ 
\vdots & \vdots & \vdots  & \vdots & \vdots \\
\alpha  _{s_{1}}^{(1)} & \alpha  _{s_{1}+1}^{(1)}  & \ldots  &   \alpha_{s_{1}+k-2}^{(1)} &   \alpha_{s_{1}+k-1}^{(1)}  \\ 
\hline \\
\alpha  _{1}^{(2)} & \alpha  _{2}^{(2)}  & \ldots  &   \alpha_{k-1}^{(2)} &   \alpha_{k}^{(2)}  \\
\alpha  _{2}^{(2)} & \alpha  _{3}^{(2)}  & \ldots  &   \alpha_{k}^{(2)} &   \alpha_{k+1}^{(2)} \\ 
\vdots & \vdots & \vdots & \vdots & \vdots  \\
\alpha  _{s_{2}}^{(2)} & \alpha  _{s_{2}+1}^{(2)}  & \ldots  &   \alpha_{s_{2}+k-2}^{(2)} &   \alpha_{s_{2}+k-1}^{(2)}  \\ 
\end{array} \right )  
\end{equation}

$\boxed{   \textbf{The case $m=3,\quad 2\leqslant s_{1} \leqslant s_{2} \leqslant s_{3} ,\; s_{1}+s_{2}+s_{3} = \delta \leqslant k$}} $\\[0.05cm]

   Then $\delta = s_{1}+s_{2}+s_{3} $ and $ \Gamma_{\delta}^{\left[ s_{1}\atop{s_{2} \atop s_{3}} \right]\times k} $denotes  the number  of triple  persymmetric \\
    $(s_{1}+s_{2}+s_{3}) \times k$  rank $ s_{1}+s_{2} +s_{3}$   matrices over  $\mathbb{F}_{2} $  of the below form
    
   \begin{equation}
 \label{eq 1.5}           
     \left (  \begin{array} {ccccc}
\alpha  _{1}^{(1)} & \alpha  _{2}^{(1)} & \ldots  &   \alpha_{k-1}^{(1)} &   \alpha_{k}^{(1)}  \\
\alpha  _{2}^{(1)} & \alpha  _{3}^{(1)}  & \ldots   &   \alpha_{k}^{(1)} &   \alpha_{k+1}^{(1)} \\ 
\vdots & \vdots & \vdots  & \vdots & \vdots \\
\alpha  _{s_{1}}^{(1)} & \alpha  _{s_{1}+1}^{(1)}  & \ldots  &   \alpha_{s_{1}+k-2}^{(1)} &   \alpha_{s_{1}+k-1}^{(1)}  \\ 
\hline \\
\alpha  _{1}^{(2)} & \alpha  _{2}^{(2)}  & \ldots  &   \alpha_{k-1}^{(2)} &   \alpha_{k}^{(2)}  \\
\alpha  _{2}^{(2)} & \alpha  _{3}^{(2)}  & \ldots  &   \alpha_{k}^{(2)} &   \alpha_{k+1}^{(2)} \\ 
\vdots & \vdots & \vdots & \vdots & \vdots  \\
\alpha  _{s_{2}}^{(2)} & \alpha  _{s_{2}+1}^{(2)}  & \ldots  &   \alpha_{s_{2}+k-2}^{(2)} &   \alpha_{s_{2}+k-1}^{(2)}  \\ 
\hline\\
\alpha  _{1}^{(3)} & \alpha  _{2}^{(3)}  & \ldots  &   \alpha_{k-1}^{(3)}  &   \alpha_{k}^{(3)} \\
\alpha  _{2}^{(3)} & \alpha  _{3}^{(3)}  & \ldots  &   \alpha_{k}^{(3)}&   \alpha_{k+1}^{(3)}  \\ 
\vdots & \vdots & \vdots & \vdots & \vdots \\
\alpha  _{s_{3}}^{(3)} & \alpha  _{s_{3}+1}^{(3)}  & \ldots  &   \alpha_{s_{3}+k-2}^{(3)}&   \alpha_{s_{3}+k-1}^{(3)}   \\ 
\end{array} \right )  
\end{equation}
   $ \boxed{ \textbf{The case $m\geqslant 4,\quad s_{1}=s_{2}=\ldots = s_{m-3}=1,\; 2 \leqslant s_{m-2} \leqslant s_{m-1} \leqslant s_{m},\; \sum_{j=1}^{m} s_{j}= \delta \leqslant k $}} $
   
      Then $\delta = m-3 + s_{m-2}+s_{m-1}+s_{m} $ and     $ \Gamma_{\delta}^{\left[1\atop {\vdots \atop{1\atop {s_{m-2}\atop{s_{m-1} \atop s_{m}}}}}\right]\times k} $  denotes  the number  of $ m-times $  persymmetric 
    $\delta \times k$  rank $ \delta$   matrices over  $\mathbb{F}_{2} $  of the below form

     \begin{equation}
 \label{eq 1.6}            
    \left (  \begin{array} {ccccc}
\alpha  _{1}^{(1)} & \alpha  _{2}^{(1)} & \ldots  &   \alpha_{k-1}^{(1)} &   \alpha_{k}^{(1)}  \\
\alpha  _{1}^{(2)} & \alpha  _{2}^{(2)}  & \ldots  &   \alpha_{k-1}^{(2)} &   \alpha_{k}^{(2)}  \\
\alpha  _{1}^{(3)} & \alpha  _{2}^{(3)}  & \ldots  &   \alpha_{k-1}^{(3)}  &   \alpha_{k}^{(3)} \\
\vdots & \vdots & \vdots  & \vdots & \vdots \\
\alpha  _{1}^{(m-3)} & \alpha  _{2}^{(m-3)}  & \ldots  &   \alpha_{k-1}^{(m-3)} &   \alpha_{k}^{(m-3)} \\
\hline \\    
  \alpha  _{1}^{(m-2)} & \alpha  _{2}^{(m-2)} & \ldots  &   \alpha_{k-1}^{(m-2)} &   \alpha_{k}^{(m-2)}  \\
\alpha  _{2}^{(m-2)} & \alpha  _{3}^{(m-2)}  & \ldots   &   \alpha_{k}^{(m-2)} &   \alpha_{k+1}^{(m-2)} \\ 
\vdots & \vdots & \vdots  & \vdots & \vdots \\
\alpha  _{s_{m-2}}^{(m-2)} & \alpha  _{s_{m-2}+1}^{(m-2)}  & \ldots  &   \alpha_{s_{m-2}+k-2}^{(m-2)} &   \alpha_{s_{m-2}+k-1}^{(m-2)}  \\ 
\hline \\
\alpha  _{1}^{(m-1)} & \alpha  _{2}^{(m-1)}  & \ldots  &   \alpha_{k-1}^{(m-1)} &   \alpha_{k}^{(m-1)}  \\
\alpha  _{2}^{(m-1)} & \alpha  _{3}^{(m-1)}  & \ldots  &   \alpha_{k}^{(m-1)} &   \alpha_{k+1}^{(m-1)} \\ 
\vdots & \vdots & \vdots & \vdots & \vdots  \\
\alpha  _{s_{m-1}}^{(m-1)} & \alpha  _{s_{m-1}+1}^{(m-1)}  & \ldots  &   \alpha_{s_{m-1}+k-2}^{(m-1)} &   \alpha_{s_{m-1}+k-1}^{(m-1)}  \\ 
\hline\\
\alpha  _{1}^{(m)} & \alpha  _{2}^{(m)}  & \ldots  &   \alpha_{k-1}^{(m)}  &   \alpha_{k}^{(m)} \\
\alpha  _{2}^{(m)} & \alpha  _{3}^{(m)}  & \ldots  &   \alpha_{k}^{(m)}&   \alpha_{k+1}^{(m)}  \\ 
\vdots & \vdots & \vdots & \vdots & \vdots \\
\alpha  _{s_{m}}^{(m)} & \alpha  _{s_{m}+1}^{(m)}  & \ldots  &   \alpha_{s_{m}+k-2}^{(m)}&   \alpha_{s_{m}+k-1}^{(m)}   \\ 
\end{array} \right )  
\end{equation}
\newpage
   \section{A conjecture concerning m-times persymmetric matrices}
  \label{sec 2}  
\textbf{Conjecture} : 
Let   $1\leqslant m\leqslant \sum_{j=1}^{m} s_{j}= \delta \leqslant k.$\\
We state that the number   $ \Gamma_{\delta}^{\left[s_{1}\atop{\vdots \atop s_{m}}\right]\times k} $ of rank $\delta $ m-times persymmetric matrices over the finite field   $ \mathbb{F}_{2}$ of the form \eqref{eq 1.1} is equal to \\
\begin{equation}
\label{eq 2.1}
 \displaystyle 2^{\delta -m}\prod_{j=1}^{m}(2^{k}-2^{\delta -j}) = 2^{(1+m)\delta-\frac{m^2}{2} -\frac{3m}{2}}\prod_{j=1}^{m}(2^{k-\delta+j} -1)
\end{equation}
In the case $\delta =k$ we deduce easily from \eqref{eq 2.1} that the fraction of invertible m-times persymmetric matrices is equal to  $\prod_{j=1}^{m}(1-2^{-j}) .$\\
To justify our assumption  \eqref{eq 2.1} we shall prove that our supposition is valid in the following  five cases:\\

\begin{itemize}
\item (1)  \textbf{The case $s_{1}=s_{2}=\ldots = s_{m}=1,\; m=\delta \leqslant k $} \\
\item  (2) \textbf{The case $m=1,\quad 2\leqslant s_{1}=\delta \leqslant k$}\\
\item (3)  \textbf{The case $m=2,\quad 2\leqslant s_{1} \leqslant s_{2} ,\; s_{1}+s_{2}=\delta \leqslant k$}\\
\item  (4)  \textbf{The case $m=3,\quad 2\leqslant s_{1} \leqslant s_{2} \leqslant s_{3} ,\; s_{1}+s_{2}+s_{3} = \delta \leqslant k$}\\
\item  (5)    \textbf{The case $m\geqslant 4,\quad s_{1}=s_{2}=\ldots = s_{m-3}=1,\; 2 \leqslant s_{m-2} \leqslant s_{m-1} \leqslant s_{m},\; \sum_{j=1}^{m} s_{j}= \delta \leqslant k $}
\end{itemize}

    \section{Justification of our  assumption  \eqref{eq 2.1} in the first case}
  \label{sec 3}  
 \begin{thm}
\label{thm 3.1}
The number of $m\times k$ matrices over $ \mathbb{F}_{2}$ of rank m  is equal to 
$\displaystyle \prod_{l=0}^{m-1}(2^{k}-2^{l}) = \prod_{j=1}^{m}(2^{k}-2^{m-j})$
\end{thm}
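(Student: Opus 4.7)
The plan is to count rank-$m$ matrices of shape $m \times k$ over $\mathbb{F}_2$ by constructing them one row at a time and using the fact that a matrix has rank $m$ precisely when its $m$ rows form a linearly independent family in $\mathbb{F}_2^k$.

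First I would fix an ordering of the rows $r_1, r_2, \ldots, r_m$ and note that the condition ``the matrix has rank $m$'' is equivalent to ``for every $j$ with $1 \leqslant j \leqslant m$, the vector $r_j$ does not belong to the $\mathbb{F}_2$-span of $r_1, \ldots, r_{j-1}$.'' This reformulation is the whole point of the proof: it turns a global rank condition into a sequence of local, independent choices.

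Next I would count the number of admissible choices at stage $j$. At this stage, $r_1, \ldots, r_{j-1}$ have already been chosen to be linearly independent, so they span a subspace of $\mathbb{F}_2^k$ of dimension exactly $j-1$, which contains $2^{j-1}$ vectors. The number of valid choices for $r_j$ is therefore $2^k - 2^{j-1}$. Multiplying over $j = 1, 2, \ldots, m$ gives
\begin{equation*}
\prod_{j=1}^{m}\bigl(2^k - 2^{j-1}\bigr) = \prod_{l=0}^{m-1}\bigl(2^k - 2^l\bigr),
\end{equation*}
which is the first of the two expressions claimed in the statement. Reindexing by $j \mapsto m-j$ (equivalently $l = m-j$) yields the equivalent form $\prod_{j=1}^{m}(2^k - 2^{m-j})$.

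No step here looks technically hard; the only thing to be careful about is making sure the argument is valid at $j = 1$ (the empty span has $2^0 = 1$ element, namely the zero vector, so the count $2^k - 1$ correctly expresses ``any nonzero row''), and that the condition $m \leqslant k$ coming from the hypotheses guarantees that the factors $2^k - 2^{j-1}$ remain positive throughout, so that a rank-$m$ matrix indeed exists. Otherwise the argument is a direct row-by-row greedy count with no real obstacle.
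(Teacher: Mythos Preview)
Your argument is correct and is precisely the classical row-by-row count of ordered linearly independent families in $\mathbb{F}_2^k$. The paper itself does not give a proof at all: it merely refers back to the matrix form \eqref{eq 1.2} and cites Landsberg and Fisher--Alexander for the formula. Your write-up therefore supplies what the paper omits, and the method you use is the standard one underlying those references, so there is no real divergence in approach---you have simply unpacked the citation.
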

\begin{proof}
 See \eqref{eq 1.2} refer to  Section \ref{sec 1}.\\
We have,  $ \displaystyle  \Gamma_{m}^{\left[1\atop{\vdots \atop 1}\right]\times k} 
=  \prod_{l=0}^{m-1}(2^{k}-2^{l}) $\quad  (See George Landsberg [1] or S.D. Fisher and M.N Alexander [2] )
\end{proof}

  \section{Justification of our  assumption  \eqref{eq 2.1} in the second  case}
  \label{sec 4}  

 \begin{thm}
\label{thm 4.1}
The number of $\delta\times k$ persymmetric  matrices over $ \mathbb{F}_{2}$ of rank $\delta = s_{1}$  is equal to 
$\displaystyle 2^{k+\delta-1} -2^{2\delta-2} = 2^{\delta-1}\prod_{j=1}^{1}(2^{k}-2^{\delta-j})$
\end{thm}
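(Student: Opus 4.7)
My plan is to count the rank-deficient matrices and subtract from the total. A $\delta\times k$ persymmetric matrix over $\mathbb{F}_{2}$ is determined by its parameter string $\alpha=(\alpha_{1},\ldots,\alpha_{\delta+k-1})\in\mathbb{F}_{2}^{\delta+k-1}$, giving $2^{\delta+k-1}$ matrices in total. Since $2^{\delta-1}(2^{k}-2^{\delta-1})=2^{\delta+k-1}-2^{2\delta-2}$, the theorem is equivalent to the assertion that exactly $2^{2\delta-2}$ of them have rank strictly less than $\delta$, and this is the quantity I will compute.

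The starting observation is the kernel reformulation: the matrix has rank $<\delta$ if and only if some nonzero $x=(x_{1},\ldots,x_{\delta})\in\mathbb{F}_{2}^{\delta}$ satisfies $\sum_{i=1}^{\delta}x_{i}\,\alpha_{i+l-1}=0$ for every $l=1,\ldots,k$. For fixed nonzero $x$, set $d:=\max\{i:x_{i}=1\}$. The $k$ equations then express the entries $\alpha_{d},\alpha_{d+1},\ldots,\alpha_{d+k-1}$ recursively as $\mathbb{F}_{2}$-linear combinations of the free initial entries $\alpha_{1},\ldots,\alpha_{d-1}$, while the trailing entries $\alpha_{d+k},\ldots,\alpha_{\delta+k-1}$ remain unconstrained. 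Thus the solution set $S_{x}:=\{\alpha : x^{T}H_{\alpha}=0\}$ has cardinality $2^{(d-1)+(\delta-d)}=2^{\delta-1}$, uniformly in $x$.

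To assemble the cardinality of $\bigcup_{x\neq 0}S_{x}$, I would use M\"obius inversion on the lattice of $\mathbb{F}_{2}$-subspaces of $\mathbb{F}_{2}^{\delta}$. Setting $W_{V}:=\{\alpha : V\subseteq \text{left kernel of } H_{\alpha}\}$, the count of full-rank $\alpha$ equals $\sum_{V}(-1)^{\dim V}\,2^{\binom{\dim V}{2}}\,|W_{V}|$, with each $|W_{V}|$ computable by Gaussian elimination on the linear system imposed by a basis of $V$. An alternative, more combinatorial route is induction on $k$ with base $k=\delta$, which is the classical count of $2^{2\delta-2}$ invertible $\delta\times\delta$ Hankel matrices over $\mathbb{F}_{2}$; the inductive step then reduces to the stability $B(\delta,k+1)=B(\delta,k)$, where $B(\delta,k)$ denotes the number of rank-deficient $\delta\times k$ persymmetric matrices.

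I expect the main obstacle to lie in the final bookkeeping. In the M\"obius approach, $|W_{V}|$ genuinely depends on $V$ and not only on $\dim V$, as one verifies already at $\delta=3$, so one must prove that the alternating sum nevertheless collapses cleanly to $2^{\delta+k-1}-2^{2\delta-2}$. In the inductive approach, an individual parameter block $(\alpha_{1},\ldots,\alpha_{\delta+k-1})$ admits $0$, $1$, or $2$ rank-deficient extensions by the new entry $\alpha_{k+\delta}$, and the delicate point is to show, via a case analysis on whether the unit vector $e_{\delta}$ lies in the left kernel of the smaller matrix $H_{\alpha}^{(k)}$, that these locally irregular contributions aggregate to a $k$-invariant total. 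Either formulation of this balancing lemma is where the genuine algebraic structure of Hankel matrices enters the argument.
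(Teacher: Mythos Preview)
The paper does not actually prove this theorem: its entire argument is a citation to Daykin and to the author's earlier preprint. Your proposal is therefore not a variant of the paper's proof but an attempt at a self-contained argument that the paper does not supply.

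Your setup is correct. A $\delta\times k$ persymmetric matrix is parametrized by a string in $\mathbb{F}_{2}^{\delta+k-1}$, and for each nonzero $x$ the set $S_{x}$ has size $2^{\delta-1}$ by the linear-recurrence computation you give. The target equality $2^{k+\delta-1}-2^{2\delta-2}$ is thus equivalent to $|\bigcup_{x\neq 0}S_{x}|=2^{2\delta-2}$, and this is precisely the step you leave open. That gap is genuine. In the M\"obius route you correctly observe that $|W_{V}|$ depends on $V$ and not merely on $\dim V$, but you give no mechanism by which the alternating sum collapses; there is real work here, because the Hankel constraint interacts with the subspace structure in a non-uniform way. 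In the inductive route the base case $k=\delta$ is exactly Daykin's count of invertible square Hankel matrices, so invoking it as ``classical'' is tantamount to citing the same literature the paper cites; and the inductive step, as you note, requires a balancing lemma that you state but do not prove.

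In short: your plan is a reasonable outline of how one would reprove Daykin's result from scratch, and the preliminary computations are right, but the core combinatorial identity remains unestablished in your write-up. Relative to the paper you are attempting strictly more, since the paper is content to quote the result; but as it stands your proposal is a sketch rather than a proof.
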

\begin{proof}
 See \eqref{eq 1.3} refer to  Section \ref{sec 1}.\\
We have, \\
 $\displaystyle  \Gamma_{\delta}^{[ s_{1}] \times k}=  2^{k+\delta-1} -2^{2\delta-2}$\;(See David E. Daykin [3] or Cherly [4] )
 \end{proof}

  \section{Justification of our  assumption  \eqref{eq 2.1} in the third  case}
  \label{sec 5}  

 \begin{thm}
\label{thm 5.1}
The number of $\delta\times k$ double  persymmetric  matrices over $ \mathbb{F}_{2}$ of rank $\delta = s_{1}+s_{2}$  is equal to 
$\displaystyle   2^{\delta-2}\prod_{j=1}^{2}(2^{k}-2^{\delta-j})$
\end{thm}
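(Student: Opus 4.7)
The plan is to reduce the count to a signed sum over subspaces of $\mathbb{F}_2^\delta$ via M\"obius inversion on the subspace lattice, and to evaluate each term using the polynomial encoding of the Hankel structure. First I would parametrize $M = \binom{H_1}{H_2}$ by $(\alpha^{(1)}, \alpha^{(2)})$ and attach to each block the polynomial $A_i(x) = \sum_{j} \alpha_j^{(i)} x^{j-1}$. A row combination with weights $v = (v^{(1)}, v^{(2)}) \in \mathbb{F}_2^{s_1} \oplus \mathbb{F}_2^{s_2}$ corresponds to the polynomial $\phi_1(x) A_1(x) + \phi_2(x) A_2(x)$, where $\phi_i(x) = \sum_j v_j^{(i)} x^{j-1}$, and the condition $v^{T} M = 0$ is precisely that the $k$ middle coefficients of this polynomial vanish.

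Second, using the $q$-binomial identity $\sum_{r} \binom{d}{r}_q (-1)^r q^{\binom{r}{2}} = \mathbf{1}[d = 0]$ at $q = 2$, one writes
\[
\Gamma_\delta^{\left[s_{1}\atop s_{2}\right]\times k} = \sum_{W \subseteq \mathbb{F}_2^\delta} (-1)^{\dim W} 2^{\binom{\dim W}{2}} \, N(W),
\]
where $N(W)$ counts parameter pairs such that every $v \in W$ annihilates $M$ on the left. The same approach applied to a single Hankel matrix recovers Theorem~\ref{thm 4.1}: the map $\alpha \mapsto v^{T} H(\alpha)$ is surjective onto $\mathbb{F}_2^k$ for every nonzero $v$, so $N(W) = 2^{s-1}$ uniformly on the $2^s - 1$ one-dimensional subspaces, and a short M\"obius computation collapses the sum to $2^{k+s-1} - 2^{2s-2}$.

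For the double case I would stratify $W$ by its Goursat invariants (the intersections $W^{(i)} = W \cap \mathbb{F}_2^{s_i}$, the projections $W_i$ onto each factor, and the induced isomorphism $W_1/W^{(1)} \cong W_2/W^{(2)}$). A direct computation of the three subcases shows that every $1$-dim subspace yields the uniform value $N(W) = 2^{k+\delta-2}$; for higher-dimensional $W$ one reads off $N(W)$ from the polynomial constraint that $\phi_1 A_1 + \phi_2 A_2$ lie in the $2$-dimensional subspace $L$ of polynomials with vanishing middle coefficients. The main obstacle is the case of \emph{graph} subspaces (nontrivial projections on both sides, trivial intersections with the summands): there $N(W)$ a priori depends on the $\gcd$ structure of $(\phi_1, \phi_2)$ rather than only on $\dim W$, and the key step is to verify that this dependence cancels inside the M\"obius sum via further applications of the $q$-binomial identity, so that the final result depends only on $\delta$ and equals $2^{\delta-2}(2^k - 2^{\delta-1})(2^k - 2^{\delta-2})$.
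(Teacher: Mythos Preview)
Your approach is entirely different from the paper's: the paper does not argue Theorem~\ref{thm 5.1} here at all but simply quotes the closed form $2^{2k+\delta-2}-3\cdot2^{k+2\delta-4}+2^{3\delta-5}$ from the long reference~[5] and then factors it algebraically as $2^{\delta-2}(2^k-2^{\delta-1})(2^k-2^{\delta-2})$. Your M\"obius-inversion plan, by contrast, aims at a self-contained argument, which would be a genuine improvement if it can be carried through.

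There is, however, a real gap already in your warm-up. You write that for a single Hankel block the M\"obius sum ``collapses'' once one knows $N(W)=2^{s-1}$ on the one-dimensional subspaces, but the sum does not truncate there: for $\dim W\ge 2$ the value $N(W)$ is \emph{not} constant on subspaces of a given dimension (e.g.\ for $s=k=3$ one finds $N(W)\in\{1,2\}$ among the seven planes, according to the linear-recurrence structure encoded by $W^{\perp}$), and these non-uniform higher-order terms are essential to reach $2^{k+s-1}-2^{2s-2}$. The same difficulty recurs, amplified, in the double case: you correctly isolate the graph subspaces as the obstruction and observe that $N(W)$ there depends on the $\gcd$ of the associated polynomials, but the claim that ``this dependence cancels inside the M\"obius sum via further applications of the $q$-binomial identity'' is precisely the crux and is left unsupported. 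Without an explicit mechanism for that cancellation---for instance a stratification of subspaces by the gcd of their associated polynomial ideal together with a count of subspaces in each stratum---the proposal is a plausible outline but not yet a proof.
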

\begin{proof}
 See \eqref{eq 1.4} refer to   Section \ref{sec 1}.\\
We have, \\
$\displaystyle  \Gamma_{\delta}^{\left[s_{1}\atop s_{2} \right]\times k} = 2^{2k+\delta-2}-3\cdot2^{k+2\delta-4}+2^{3\delta-5}=
2^{\delta-2}(2^{2k}-3\cdot2^{k+\delta-2}+2^{2\delta-3})=  2^{\delta-2}\prod_{j=1}^{2}(2^{k}-2^{\delta-j})$ (See Cherly [5] )
 \end{proof}

  \section{Justification of our  assumption  \eqref{eq 2.1} in the fourth case}
  \label{sec 6}  

 \begin{thm}
\label{thm 6.1}
The number of $\delta\times k$ triple  persymmetric  matrices over $ \mathbb{F}_{2}$ of rank $\delta = s_{1}+s_{2}+s_{3} $  is equal to 
$\displaystyle   2^{\delta-3}\prod_{j=1}^{3}(2^{k}-2^{\delta-j})$
\end{thm}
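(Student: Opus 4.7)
The plan is to extend the argument that Cherly [5] used for the double persymmetric case (Theorem~\ref{thm 5.1}) to three persymmetric blocks, using discrete Fourier analysis and M\"obius inversion over $\mathbb{F}_2$.

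First, I would parameterise a matrix $M$ of the form \eqref{eq 1.5} by its three independent generating sequences $\alpha^{(i)}=(\alpha_1^{(i)},\ldots,\alpha_{s_i+k-1}^{(i)})$ for $i=1,2,3$, so that the ambient count of triple persymmetric matrices of this shape is $2^{\delta+3(k-1)}$. Writing a prospective left-null vector as $v=(v^{(1)},v^{(2)},v^{(3)}) \in \mathbb{F}_2^{s_1}\oplus \mathbb{F}_2^{s_2}\oplus \mathbb{F}_2^{s_3}$, the row combination $v^{T}M$ splits as a sum of three Toeplitz/Hankel convolutions, one per block; this decoupling is the structural feature that made the cases $m=1,2$ tractable.

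Next, to count matrices of rank exactly $\delta$, I would impose $v^{T}M\neq 0$ for every nonzero $v$, via the character identity $\mathbf{1}[w=0]=2^{-k}\sum_{y\in\mathbb{F}_2^k}(-1)^{y\cdot w}$ together with M\"obius inversion on the subspace lattice of $\mathbb{F}_2^\delta$. This gives a formula of the shape
\[
\Gamma_{\delta}^{\left[ s_{1}\atop{s_{2} \atop s_{3}} \right]\times k} = \sum_{W \leqslant \mathbb{F}_2^\delta}\mu(W)\,\#\{M : v^{T}M=0 \text{ for every } v\in W\},
\]
with $\mu$ the standard $q$-M\"obius function for $q=2$. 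Thanks to the convolution decoupling above, each inner count factorises as a product of three block contributions, and each factor can be evaluated by the Hankel polynomial technique of Daykin [3] applied block-by-block.

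After collecting and simplifying the resulting expression it should reduce to $2^{\delta-3}\prod_{j=1}^{3}(2^{k}-2^{\delta-j})$. I would sanity-check by specialising to $\delta=k$, in which case the invertible fraction must equal $(1-2^{-1})(1-2^{-2})(1-2^{-3})=21/64$, in agreement with the main conjecture.

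The principal obstacle I anticipate is the evaluation of the cross-block contributions. For $m=2$ the proof in [5] yields a single cross term producing the coefficient $3\cdot 2^{k+\delta-4}$ in Theorem~\ref{thm 5.1}; for $m=3$ the cross terms come from the three pairs $(1,2)$, $(1,3)$, $(2,3)$ of blocks together with a genuinely trivariate triple interaction, and verifying that they telescope cleanly into the product $2^{\delta-3}\prod_{j=1}^{3}(2^{k}-2^{\delta-j})$ rather than leaving residual error terms is where most of the work will lie. This is presumably also why the author treats the cases $m=1,2,3$ separately before switching to a different approach in case (5).
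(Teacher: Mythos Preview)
The paper does not prove this theorem in situ: its ``proof'' consists of citing Cherly~[6] for the closed form $2^{3k+\delta-3}-7\cdot2^{2k+2\delta-6}+7\cdot2^{k+3\delta-8}-2^{4\delta-9}$ and then checking algebraically that this factors as $2^{\delta-3}\prod_{j=1}^{3}(2^{k}-2^{\delta-j})$. Reference~[6] is a 233-page manuscript whose method is exponential-sum computation, so there is no self-contained argument here against which to compare your strategy; all of the substance is outsourced.

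Your framework is reasonable in outline, and the sanity check at $\delta=k$ is correct, but the central step has a gap. You assert that for each subspace $W\leqslant\mathbb{F}_{2}^{\delta}$ the inner count $\#\{M:v^{T}M=0\ \text{for all}\ v\in W\}$ ``factorises as a product of three block contributions''. This is not true in general: a vector $v=(v^{(1)},v^{(2)},v^{(3)})$ with several nonzero components imposes a relation coupling the generating sequences $\alpha^{(1)},\alpha^{(2)},\alpha^{(3)}$, and even within a single Hankel block the count depends on $W$ itself rather than on $\dim W$ alone (for example, with $s_{1}=k=3$ the two-dimensional spans of $\{e_{1},e_{2}\}$ and of $\{e_{1},e_{3}\}$ yield inner counts $2$ and $1$ respectively). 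You implicitly acknowledge this later when you speak of ``cross-block contributions'' and a ``trivariate triple interaction'', but that is inconsistent with the earlier factorisation claim. The real difficulty---and presumably the reason~[6] runs to over two hundred pages---is that the persymmetric constraint breaks the $\mathrm{GL}_{\delta}(\mathbb{F}_{2})$-symmetry that makes the unconstrained case routine, so evaluating $\sum_{W}\mu(0,W)f(W)$ requires controlling how $f(W)$ varies over all subspaces, and your sketch does not indicate how you would carry this out.
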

\begin{proof}
 See \eqref{eq 1.5} refer to  Section \ref{sec 1}.\\
We have, \\
$\displaystyle  \Gamma_{\delta}^{\left[ s_{1}\atop{s_{2} \atop s_{3}} \right]\times k} =  2^{3k+\delta-3}-7\cdot2^{2k+2\delta-6}+7\cdot2^{k+3\delta-8}+2^{4\delta-9}= 2^{\delta-3}(2^{3k}-7\cdot2^{2k+2\delta-3}+7\cdot2^{k+2\delta-5} -2^{3\delta-6})= 2^{\delta-3}\prod_{j=1}^{3}(2^{k}-2^{\delta-j})$ (See Cherly [6] )
 \end{proof}
  \section{Justification of our  assumption  \eqref{eq 2.1} in the fifth case}
  \label{sec 7}  

 \begin{thm}
\label{thm 7.1}
The number of $\delta\times k$ m-times  persymmetric  matrices over $ \mathbb{F}_{2}$ of rank $\delta = m-3+ s_{1}+s_{2}+s_{3} $  is equal to 
$\displaystyle   2^{\delta-m}\prod_{j=1}^{m}(2^{k}-2^{\delta-j})$
\end{thm}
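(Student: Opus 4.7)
The plan is to exploit the block structure of the matrix in \eqref{eq 1.6}: the first $m-3$ blocks (each with $s_i=1$) contribute a single arbitrary row of $k$ free entries, while the last three blocks form a triple persymmetric submatrix exactly of the form treated by Theorem \ref{thm 6.1}. Crucially, the entries defining the top $m-3$ rows are completely disjoint from those defining the bottom triple persymmetric submatrix, so the two parts can be sampled independently and the count factorizes.

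Set $\sigma := s_{m-2}+s_{m-1}+s_{m}$, so that $\delta = (m-3)+\sigma$. A first observation is that subadditivity of rank forces the bottom triple persymmetric submatrix to have full rank $\sigma$ whenever the entire matrix has rank $\delta$: otherwise, even if all $m-3$ top rows contributed independent dimensions, the total rank would fall short of $\delta$. By Theorem \ref{thm 6.1}, the number of rank-$\sigma$ triple persymmetric $\sigma\times k$ submatrices equals $2^{\sigma-3}\prod_{j=1}^{3}(2^{k}-2^{\sigma-j})$.

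Conditional on any such full-rank bottom submatrix, I would then count the admissible choices of the top $m-3$ rows. Building up the matrix from the bottom, the $(t+1)$-th adjoined top row (for $t=0,1,\ldots,m-4$) must avoid the already-spanned subspace of dimension $\sigma+t$, contributing $2^{k}-2^{\sigma+t}$ options. Multiplying yields $\prod_{t=0}^{m-4}(2^{k}-2^{\sigma+t})$; because these rows involve only the variables $\alpha_{j}^{(i)}$ with $i\le m-3$, which do not appear anywhere in the triple persymmetric block, the number of valid top-row configurations is genuinely independent of which rank-$\sigma$ bottom submatrix was chosen.

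Finally, the two factors combine as follows: the exponents of $2$ inside the products are $\sigma-3,\sigma-2,\sigma-1$ from the bottom factor and $\sigma,\sigma+1,\ldots,\sigma+m-4$ from the top factor, which together form the consecutive run $\delta-m,\delta-m+1,\ldots,\delta-1$. This telescopes into $\prod_{j=1}^{m}(2^{k}-2^{\delta-j})$, and since $2^{\sigma-3}=2^{\delta-m}$, the total equals $2^{\delta-m}\prod_{j=1}^{m}(2^{k}-2^{\delta-j})$, matching the claimed formula \eqref{eq 2.1}. I do not anticipate a serious obstacle: the disjointness of the entries and the rank-subadditivity argument reduce the problem cleanly to Theorem \ref{thm 6.1}, and the only step demanding attention is the bookkeeping that merges the two products into a single contiguous range of exponents.
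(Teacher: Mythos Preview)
Your proposal is correct and follows essentially the same approach as the paper: both factor the count as (number of full-rank triple persymmetric bottom blocks, given by Theorem~\ref{thm 6.1}) times (number of ways to choose the $m-3$ top rows, each avoiding the span built so far), and then reindex the resulting product into $\prod_{j=1}^{m}(2^{k}-2^{\delta-j})$. The only difference is that the paper cites an external lemma (Lemma~10.1 in [4]) for the factorization step \eqref{eq 7.1}, whereas you supply the rank-subadditivity and disjoint-entries argument directly, making your write-up more self-contained.
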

\begin{proof}
 See \eqref{eq 1.6} refer to Section \ref{sec 1}.\\
We obtain from Lemma 10.1 in [4] with some obviously modifications that 
\begin{equation}
\label{eq 7.1}
   \Gamma_{\delta}^{\left[1\atop {\vdots \atop{1\atop {s_{m-2}\atop{s_{m-1} \atop s_{m}}}}}\right]\times k} =  \Gamma_{\delta-m+3}^{\left[ s_{m-2}\atop{s_{m-1} \atop s_{m}} \right]\times k}  \prod_{j=1}^{m-3}(2^{k}-2^{\delta-j})  
\end{equation}
We get from  Theorem \ref{thm 6.1} that  
\begin{equation}
\label{eq 7.2} 
 \Gamma_{\delta-m+3}^{\left[ s_{m-2}\atop{s_{m-1} \atop s_{m}} \right]\times k}= 
 2^{\delta-m+3-3} \prod_{j=1}^{3}(2^{k}-2^{\delta-m+3-j})  =  2^{\delta-m} \prod_{j=m-2}^{m}(2^{k}-2^{\delta-j}) 
\end{equation}
Combining \eqref{eq 7.1} and \eqref{eq 7.2} we obtain 

$\displaystyle    \Gamma_{\delta}^{\left[1\atop {\vdots \atop{1\atop {s_{m-2}\atop{s_{m-1} \atop s_{m}}}}}\right]\times k} = 2^{\delta-m}\prod_{j=1}^{m}(2^{k}-2^{\delta-j})$
 \end{proof}
 For other related articles concerning persymmetric matrices over the finite field with two elements see Cherly    [7],[8] and [9].

  \section{An example}
  \label{sec 8}   
 
 Let $ m,k,\delta $ be rational integers such that $1\leqslant m \leqslant \delta \leqslant k. $\\
 Let $ \delta \equiv i (mod\; m ) $ that is $  \delta = i +s\cdot m $ such that $ 0\leqslant i \leqslant m-1. $  \\

 We denote by  $\textit{M}_{m,\delta,k}$  the  $ k\times \delta $  matrix over  $\mathbb{F}_{2} $  of the below form:\\

\begin{equation}
\label{eq 8.1}
  \left ( \begin{array} {ccccc|cccc}
\alpha _{1} & \alpha _{2} & \alpha _{3} &  \ldots & \alpha _{m}  &  \alpha _{m+1} & \alpha _{m+2} & \ldots &  \alpha _{\delta}\\
\alpha _{m+1 } & \alpha _{m+2} & \alpha _{m+3}&  \ldots  &  \alpha _{2m} &  \alpha _{2m+1}  &  \alpha _{2m+2}  & \ldots &  \alpha _{\delta+m}\\
\alpha _{2m+1 } & \alpha _{2m+2} & \alpha _{2m+3}&  \ldots  &  \alpha _{3m} &  \alpha _{3m+1}  &  \alpha _{3m+2}  & \ldots &  \alpha _{\delta+2m}\\
\vdots & \vdots & \vdots   &  \vdots  & \vdots  & \vdots  &  \vdots  & \vdots  & \vdots \\
\vdots & \vdots & \vdots    &  \vdots & \vdots & \vdots &  \vdots  & \vdots  & \vdots  \\
\alpha _{(m-1)m+1 } & \alpha _{(m-1)m+2} & \alpha _{(m-1)m+3}&  \ldots  &  \alpha _{m^2} &  \alpha _{m^2+1}  &  \alpha _{m^2+2}  & \ldots &  \alpha _{\delta+(m-1)m}\\
\hline \\
\alpha _{m^2+1 } & \alpha _{m^2+2} & \alpha _{m^2+3}&  \ldots  &  \alpha _{m^2+m} &  \alpha _{m^2+m+1}  &  \alpha _{m^2+m+2}  & \ldots &  \alpha _{\delta+m^2}\\
\alpha _{m^2+m+1 } & \alpha _{m^2+m+2} & \alpha _{m^2+m+3}&  \ldots  &  \alpha _{m^2+2m} &  \alpha _{m^2+2m+1}  &  \alpha _{m^2+2m+2}  & \ldots &  \alpha _{\delta+m^2+m}\\
\vdots & \vdots & \vdots   &  \vdots  & \vdots  & \vdots  &  \vdots  & \vdots  & \vdots \\
\vdots & \vdots & \vdots    &  \vdots & \vdots & \vdots &  \vdots  & \vdots  & \vdots  \\
\alpha _{(k-1)m+1 } & \alpha _{(k-1)m+2} & \alpha _{(k-1)m+3}&  \ldots  &  \alpha _{km} &  \alpha _{km+1}  &  \alpha _{km+2}  & \ldots &  \alpha _{\delta+(k-1)m}\\
\end{array}  \right). 
\end{equation}

The transpose of  $\textit{M}_{m,\delta,k}$  becomes after a rearrangement of the rows a m-times  persymmetric  $\delta \times k $  matrix
with   $s_{1}=s_{2}=\ldots =s_{m-i} = s $ and $s_{m-i+1}=s_{m-i+2}=\ldots s_{m} = s+1,$  refer to Section \ref{sec 1}.\\
We then deduce from the conjecture  \eqref{eq 2.1} that the number of rank $\delta $ matrices of the form \eqref{eq 8.1} is equal to 
$ \displaystyle 2^{\delta -m}\prod_{j=1}^{m}(2^{k}-2^{\delta -j}) $

\begin{example}
Let m=3,\; $\delta = 8$ and k=10.\\
 $ 8 \equiv 2 (mod\; 3 ) $ that is $  8 = 2 +2\cdot 3 ,$ so i=2 and s=2.\\
  Then  $\textit{M}_{3,8,10}$  denotes  the  $ 10\times 8 $  matrix over  $\mathbb{F}_{2} $  of the below form:\\
  \begin{equation}
\label{eq 8.2}  
  \left ( \begin{array} {ccc|ccccc}
\alpha _{1} & \alpha _{2} & \alpha _{3} &  \alpha _{4} & \alpha _{5} &  \alpha _{6} & \alpha _{7} &  \alpha _{8}\\
\alpha _{4 } & \alpha _{5} & \alpha _{6}&  \alpha _{7} &  \alpha _{8}  &  \alpha _{9}  &  \alpha _{10} &  \alpha _{11}\\
\alpha _{7} & \alpha _{8} & \alpha _{9} &  \alpha _{10} & \alpha _{11} &  \alpha _{12} & \alpha _{13} &  \alpha _{14}\\
\hline \\
\alpha _{10 } & \alpha _{11} & \alpha _{12}&  \alpha _{13} &  \alpha _{14}  &  \alpha _{15}  &  \alpha _{16} &  \alpha _{17}\\
\alpha _{13} & \alpha _{14} & \alpha _{15} &  \alpha _{16} & \alpha _{17} &  \alpha _{18} & \alpha _{19} &  \alpha _{20}\\
\alpha _{16 } & \alpha _{17} & \alpha _{18}&  \alpha _{19} &  \alpha _{20}  &  \alpha _{21}  &  \alpha _{22} &  \alpha _{23}\\
\alpha _{19} & \alpha _{20} & \alpha _{21} &  \alpha _{22} & \alpha _{23} &  \alpha _{24} & \alpha _{25} &  \alpha _{26}\\
\alpha _{22 } & \alpha _{23} & \alpha _{24}&  \alpha _{25} &  \alpha _{26}  &  \alpha _{27}  &  \alpha _{28} &  \alpha _{29}\\
\alpha _{25} & \alpha _{26} & \alpha _{27} &  \alpha _{28} & \alpha _{29} &  \alpha _{30} & \alpha _{31} &  \alpha _{32}\\
\alpha _{28 } & \alpha _{29} & \alpha _{30}&  \alpha _{31} &  \alpha _{32}  &  \alpha _{33}  &  \alpha _{34} &  \alpha _{35}\\
\end{array}  \right). 
\end{equation}

We then obtain that the number  $ \Gamma_{8}^{\left[ 2 \atop{ 3 \atop 3}\right]\times 10}  $   of rank $\delta = 8$ matrices of the form \eqref{eq 8.2} is equal to $ \displaystyle 2^{5} \prod_{j=1}^{3}(2^{10}-2^{8 -j}) = 2^{23}\cdot7\cdot15\cdot31 = 3255 \cdot 2^{23}.$

\end{example}

\newpage

\end{document}